\documentclass[a4paper,12pt]{article}
\usepackage{times}
\usepackage[english]{babel}
\usepackage{amssymb,amsmath}
\numberwithin{equation}{section}
\usepackage{amsthm}
\usepackage{array}
\usepackage{wasysym}
\usepackage[noadjust]{cite}
\usepackage{hyperref}
\usepackage[height=22.7cm , width = 16cm , top = 4cm , left = 3cm, a4paper]{geometry}
\usepackage{amssymb}
\usepackage{amsmath}
\usepackage{cite}
 \usepackage{pstricks}
\usepackage{epsfig}
\usepackage{verbatim}
\usepackage{multicol}
\usepackage{graphicx, color, psfrag}
\usepackage{graphics, psfrag}
\usepackage[numbers,sort]{natbib}
 \newtheorem{theorem}{Theorem}[section]

\theoremstyle{definition}

\newcommand{\e}{\end{document}}

\begin{document}

\thispagestyle{empty}

\author{
{{\bf Abdelfattah Mustafa, Beih S. El-Desouky   and Shamsan AL-Garash}
\newline{\it{{}  }}
 } { }\vspace{.2cm}\\
 \small \it Department of Mathematics, Faculty of Science, Mansoura University, Mansoura 35516, Egypt.
}

\title{Weibull Generalized Exponential Distribution}

\date{}

\maketitle
\small \pagestyle{myheadings}
        \markboth{{\scriptsize Weibull Generalized Exponential Distribution}}
        {{\scriptsize { Abdelfattah Mustafa, B. S. El-Desouky and Shamsan AL-Garash}}}

\hrule \vskip 8pt

\begin{abstract}
This paper introduces a new three-parameters model called the Weibull-G exponential distribution  (WGED) distribution which exhibits bathtub-shaped hazard rate. Some of it's statistical properties are obtained including quantile, moments, generating functions, reliability and order statistics. The method of maximum likelihood is used for estimating the model parameters and the observed Fisher's information matrix is derived. We illustrate the usefulness of the proposed model by applications to real data.
\end{abstract}

\noindent
{\bf Keywords:}
{\it Weibull-G class; exponential distribution; generalized exponential; maximum likelihood estimation.}

\vspace{ 0.2 cm}

\noindent


\section{Introduction}
\noindent
The exponential distribution (ED), \cite{Guptar12001,Gupta1999},  has a wide range of applications including life testing experiments, reliability analysis, applied statistics and clinical studies. This distribution is a special case of the two parameter Weibull distribution with the shape parameter equal to 1. The origin and other aspects of this distribution can be found in \cite{Guptar12001,Guptar22001,Guptar32002,Guptar42003}. A random variable $X$ is said to have the exponential distribution (ED) with parameters $\lambda >0$ if it's probability density function (pdf) is given by

\begin{equation} \label{eq1.1}
g(x)=\lambda e^{-\lambda x},  \quad x>0,
\end{equation}

\noindent
while the cumulative distribution function (cdf) is given by

\begin{equation} \label{eq1.2}
G(x)=1-e^{-\lambda x},  \quad x>0.
\end{equation}
\noindent

\noindent
The survival function is given by the equation

\begin{equation} \label{eq1.3}
S(x)=1-G(x)=e^{-\lambda x},  \quad x>0,
\end{equation}

\noindent
and the hazard function is

\begin{equation} \label{eq1.4}
h(x)=\lambda
\end{equation}

\noindent
Weibull distribution introduced by \cite{Weibull1951} is a popular distribution for modeling phenomenon with monotonic failure rates. But this distribution does not provide a good fit to data sets with bathtub shaped or upside-down bathtub shaped (unimodal) failure rates, often encountered in reliability, engineering and biological studies. Hence a number of new distributions modeling the data in a better way have been constructed in literature as ramifications of Weibull distribution. Bourguignon et al. \cite{Marcelo2014} introduced and studied generality a family of univariate distributions with two additional parameters, similarly as the extended Weibull,   Gurvich et al.  \cite{Gurvich1998} and gamma families,  Zografos and Balakrshnan \cite{Zografos2009}, using the Weibull generator applied to the odds ratio $\frac{G(x)}{1-G(x)}$. If $G(x)$ is the baseline cumulative distribution function (cdf) of a random variable, with probability density function (pdf) $g(x)$ and the Weibull cumulative distribution function is

\begin{equation} \label{eq1.5}
F(x;a,b)=1-e^{-a x^b}, \quad x\geq 0,
\end{equation}

\noindent
with parameters $a$ and $b$ are positive. Based on this density, by replacing $x$ with ratio $\frac{G(x)}{1-G(x)}$. The cdf of Weibull- generalized distribution, say Weibull-G distribution with two extra parameters $a$ and $b$, is defined by,  Bourguignon et al. \cite{Marcelo2014}

\begin{eqnarray} \label{eq1.6}
F(x;a,b,\lambda )&=&
\int_0^{\frac{G(x;\lambda)}{1-G(x;\lambda)}}
ab t^{b-1}e^{-a x^b}dt
\nonumber \\
&=& 1-e^{-a[\frac{G(x;\lambda)}{1-G(x;\lambda)}]^b}, \quad x\geq 0,\quad a,b \geq 0,
\end{eqnarray}

\noindent
where $G(x;\lambda)$ is a baseline cdf, which depends on a parameter $\lambda $. The corresponding family pdf becomes

\begin{equation} \label{eq1.7}
f(x;a,b,\lambda )=ab \,  g(x;\lambda) \frac{[G(x;\lambda)]^{b-1}}{[1-G(x;\lambda)]^{b+1}} e^{-a[\frac{G(x;\lambda)}{1-G(x;\lambda)}]^b}
\end{equation}

\noindent
A random variable $X$ with pdf Eq. (\ref{eq1.7}) is denoted by $X$ distributed weibll-G$(a,b,\lambda)$, $x\in R$, $a,b >0$. the additional parameters induced by the weibull generator are sought as a manner to furnish a more flexible distribution. If $b=1$, it corresponds to the exponential- generator. An interpretation of the weibull-G family of distributions can by given as follows ( Coorary, \cite{Cooray20006}) is a similar context.

\noindent
Let $Y$ be a lifetime random variable having a certain continuous $G$ distribution. The odds ratio that an individual (or component) following the lifetime $Y$ will die (failure) at time $x$ is $\frac{G(x)}{1-G(x)}$. Consider that the variability of this odds of death is represented by the random variable $X$ and assume that it  follows the Weibull model with scale $a$ and shape $b$. We can write

\begin{equation*}
Pr(Y\leq x)=Pr\left(X\leq\frac{G(x)}{1-G(x)}\right)=F(x;a,b,\lambda ).
\end{equation*}
Which is given by Eq. (\ref{eq1.6}). The survival function of the Weibull-G family is given by

\begin{equation} \label{eq1.8}
R(x;a,b,\lambda) = 1-F(x;a,b,\lambda)=e^{-a[\frac{G(x)}{1-G(x)}]^b},
\end{equation}

\noindent
and hazard rate function of the Weibull-G family is given by

\begin{eqnarray} \label{eq1.9}
h(x;a,b,\lambda) &=&
\frac{f(x;a,b,\lambda)}{1-F(x;a,b,\lambda)}=\frac{ab\,  g(x;\lambda)[G(x;\lambda)]^{b-1}}{[1-G(x;\lambda)]^{b+1}} \nonumber\\
&=& ab\,  h(x;\lambda)\, \frac{[G(x;\lambda)]^{b-1}}{[1-G(x;\lambda)]^b},
\end{eqnarray}

\noindent
where $h(x;\lambda)=\frac{g(x;\lambda)}{1-G(x;\lambda)}$. The multiplying quantity $\frac{ab\, g(x;\lambda)[G(x;\lambda)]^{b-1}}{[1-G(x;\lambda)]^b}$ works as a corrected factor for the hazard rate function of the baseline model Eq.(\ref{eq1.6}) can deal with general situation in modeling survival data with various shapes of the hazard rate function.
By using the power series for the exponential function, we obtain

\begin{equation} \label{eq1.10}
 e^{-a[\frac{G(x)}{1-G(x)}]^b}=\sum_{i=0}^{\infty}\frac{(-1)^i a^i}{i!} \bigg( \frac{G(x;\lambda)}{1-G(x;\lambda)}\bigg )^{ib},
 \end{equation}

\noindent
substituting from Eq.(\ref{eq1.10}) into Eq. (\ref{eq1.7}), we get

\begin{equation} \label{eq1.11}
f(x;a,b,\lambda )=ab\,  g(x;\lambda)
\sum_{i=0}^{\infty}\frac{(-1)^i a^i}{i!}\, \frac{[G(x;\lambda)]^{b(i+1)-1}}{[1-G(x;\lambda)]^{b(i+1)+1}}.
\end{equation}

\noindent
Using the generalized binomial theorem we have

\begin{equation} \label{eq1.12}
[1-G(x;\lambda)]^{-(b(i+1)+1)}=\sum_{j=0}^{\infty}\frac{\Gamma (b(i+1)+j+1)}{j!\Gamma (b(i+1)+1)}\, [G(x;\lambda)]^j.
 \end{equation}

Inserting Eq. (\ref{eq1.12}) in Eq. (\ref{eq1.11}), the Weibull-G family density function is

\begin{equation} \label{eq1.13}
f(x;a,b,\lambda )=\sum_{i=0}^{\infty}\sum_{j=0}^{\infty}\frac{(-1)^i a^{i+1}b\, \Gamma (b(i+1)+j+1)}{i! j!\Gamma (b(i+1)+1)}  g(x;\lambda) [G(x;\lambda)]^{b(i+1)+j-1}.
 \end{equation}

\noindent
In section 2, we define the cumulative, density and hazard functions of the Weibull-G Exponential distribution (WGED) . In section 3 and 4, we introduced the statistical properties include, quantile function skewness and kurtosis, $rth$ moments and moment generating function. The distribution of the order statistics is expressed in section 5. Finally, maximum likelihood estimation of the parameters is determined in section 6. Real data sets are analyzed in Section 7 and the results are compared with existing distributions. Finally we introduce the conclusions in Section 8.


\section{The Weibull Generalized Exponential Distribution}
In this section, we study the three parameters Weibull-G exponential distribution (WGED). Using $G(x)$ and $g(x)$ in Eq. (\ref{eq1.13}) to be the cdf and pdf of Eq. (\ref{eq1.6}) and Eq. (\ref{eq1.7}). The cumulative distribution function (cdf) of the Weibull-G exponential distribution (WGED) is given by

\begin{equation} \label{eq2.1}
F(x;a,b,\lambda ) = 1- e^{-a[e^{\lambda x}-1]^b}, \quad  x>0, \quad  a, b, \lambda   >0,
\end{equation}

\noindent
The pdf corresponding to Eq. (\ref{eq2.1}) is given by

\begin{equation} \label{eq2.2}
f(x;a,b,\lambda ) = ab\lambda \,  e^{\lambda x} [e^{\lambda x}-1]^{b-1} e^{-a[e^{\lambda x}-1]^b}, \quad  x>0,
\end{equation}

\noindent
where $a, b >0$ and $\lambda >0$ are two additional shape parameters.\\
Plots of the cdf, Eq. (\ref{eq2.1}),  of the WGED for some parameter values are displayed in Figure 1,

\begin{center}
\includegraphics[width=10cm, height=6 cm]{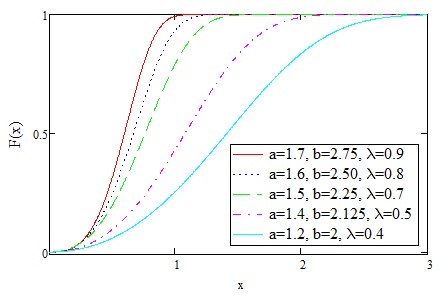}\\
Figure 1: Cumulative probability function of the WGED.
\end{center}

\noindent
We denote by $X\sim WGED (a,b,\lambda )$ a random variable having the pdf Eq. (\ref{eq2.1}). The survival function, $S(x)$, hazard rate function, $h(x)$, reversed hazard rate function, $r(x)$ and cumulative hazard rate function $H(x)$ of $X$ are given by

\begin{equation} \label{eq2.3}
S(x;a,b,\lambda  ) =1- F(x;a,b,\lambda )=e^{-a[e^{\lambda x}-1]^b}, \quad  x>0,
\end{equation}

\begin{eqnarray} \label{eq2.4}
h(x;a,b,\lambda  ) &=& ab\lambda \,  e^{\lambda x} [e^{\lambda x}-1]^{b-1}, \quad  x>0,
\end{eqnarray}

\begin{eqnarray} \label{eq2.5}
r(x;a,b,\lambda ) &=& \frac{ab\lambda \,  e^{\lambda x} [e^{\lambda x}-1]^{b-1}\cdot  e^{-a[e^{\lambda x}-1]^b}}{1-e^{-a[e^{\lambda x}-1]^b}}, \quad  x>0.
\end{eqnarray}
and
\begin{equation} \label{eq2.6}
H(x;a,b,\lambda  )= \int\limits_{0}^{\infty} h(x;a,b,\lambda )dx= a  \left[e^{e^{\lambda x}}-1 \right]^b,
\end{equation}
respectively. Plots of $S(x;a,b,\lambda ), h(x;a,b,\lambda ), r(x;a,b,\lambda )$ and $H(x;a,b,\lambda )$  of the WGED for some parameters values are displayed in Figure 2--6.
\noindent

\begin{center}
\includegraphics[width=10cm, height=6 cm]{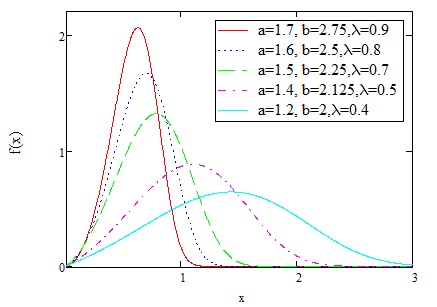}\\
Figure 2: Probability density function of the WGED.
\end{center}

\vspace{0.3 cm}

\begin{center}
\includegraphics[width=10cm, height=6 cm]{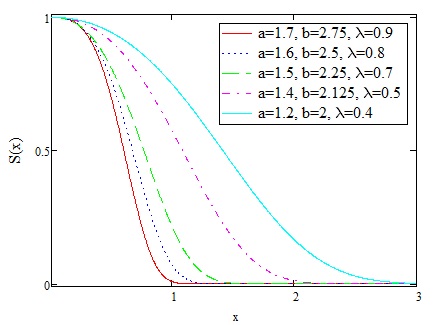}\\
Figure 3: Survival function of the WGED.
\end{center}

\vspace{ 0.3 cm}

\begin{center}
\includegraphics[width=10cm, height=6 cm]{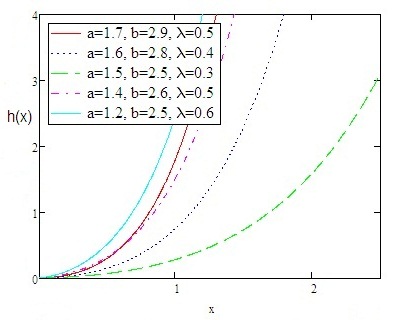}\\
Figure 4: Hazard rate function of the WGED.
\end{center}

\vspace{ 0.3 cm}

\begin{center}
\includegraphics[width=10cm, height=6 cm]{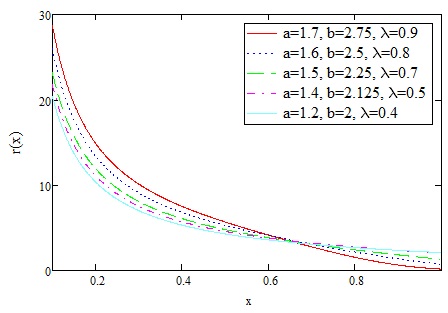}\\
Figure 5: Reversed hazard rate function of the WGED.
\end{center}

\vspace{0.3 cm}

\begin{center}
\includegraphics[width=10cm, height=6 cm]{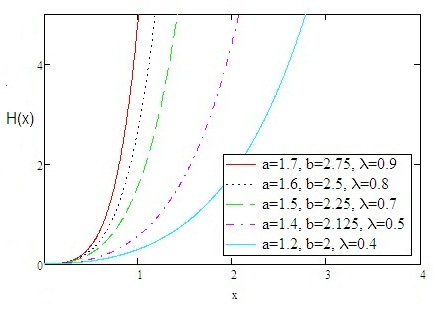}\\
Figure 6: Cumulative hazard rate function of the WGED.
\end{center}

\noindent
It is clear that the PDF and the hazard function have many different shapes, which allows this distribution to fit different types of lifetime data.

\section{Statistical Properties}
In this section, we study the statistical properties for the WGED, specially quantile function and simulation median, skewness, kurtosis and moments.

\subsection{Quantile and median}
In this subsection, we determine the explicit formulas of the quantile and the median of WGED distribution. The quantile $x_q$ of the WGED is given by

\begin{equation} \label{eq3.1}
F(x_q)=q, \quad 0<q<1.
\end{equation}

\noindent
From Eq. (\ref{eq2.1}), $x_q$ can be obtained as follows.

\begin{equation} \label{eq3.2}
x_q= \frac{1}{\lambda} \ln \left[ 1+\left(\frac{1}{a} \ln (1-q) \right)^{\frac{1}{b}} \right].
\end{equation}

\noindent
Setting $q= 0.5$ in Eq. (\ref{eq3.2}), we get the median of WGED as follows.
\begin{equation} \label{eq3.3}
x_q= \frac{1}{\lambda} \ln \left[ 1+\left(- \frac{\ln (2)}{a}  \right)^{\frac{1}{b}} \right].
\end{equation}

\subsection{The mode}
In this subsection, we will derive the mode of the WGED by derivation its pdf with respect to $x$ and equate it to zero. The mode is the solution the following equation with respect to $x$.

\begin{equation} \label{eq3.4}
f^{'}(x)=0.
\end{equation}

\noindent
By substitution PDF from Eq. (\ref{eq2.2}) in Eq.(\ref{eq3.4}), we have

\begin{eqnarray*}
&&
\frac{\partial }{\partial x} \bigg [ ab\lambda \cdot  e^{\lambda x_i} \cdot [e^{\lambda x_i}-1]^{b-1}\cdot  e^{-a[e^{\lambda x_i}-1]^b} \bigg ] = 0,
\\
&&
\frac{\partial }{\partial x} \bigg [ h(x;a,b,\lambda ) \cdot S(x;a,b,\lambda )\bigg ]=0,
\\
&&
\bigg [ h^{'}(x;a,b,\lambda )\cdot S(x;a,b,\lambda )+ h(x;a,b,\lambda )\cdot S^{'}(x;a,b,\lambda ) \bigg ] =0,
\end{eqnarray*}
then
\begin{equation}\label{eq3.5}
\bigg [ h^{'}(x;a,b,\lambda )- (h(x;a,b,\lambda ))^2 \bigg ]S(x;a,b,\lambda ) =0,
\end{equation}

\noindent
where $h(x;a,b,\lambda), \, S(x;a,b,\lambda )$ are hazard function and survival function of WGED respectively.
\\
It is not possible to get an analytic solution in $x$ to Eq. (\ref{eq3.5}) in the general case. It has to be obtained numerically by using methods such as fixed-point or bisection method.

\subsection{Skewness and kurtosis}
The analysis of the variability Skewness and Kurtosis on the shape parameters $b,\lambda $  can be investigated based on quantile measures. The short comings of the classical Kurtosis measure are well-known. The Bowely's skewness \cite{Kenney1962} based on quartiles is given by

\begin{equation} \label{eq3.6}
S_k=\frac{ q_{(0.75)} -2 q_{(0.5)}+q_{(0.25)}}{q_{(0.75)}- q_{(0.25)}},
\end{equation}

\noindent
and the Moors' Kurtosis \cite{Moors1998} is based on octiles

\begin{equation} \label{eq3.7}
K_u=\frac{ q_{(0.875)} - q_{(0.625)}-q_{(0.375)}+ q_{(0.125)}}{q_{(0.75)}- q_{(0.25)}},
\end{equation}

\noindent
where $q_{(.)}$ represents quantile function.

\subsection{Moments}
In this subsection, we discuss the $rth$ moment for WGED. Moments are important in any statistical analysis, especially in applications. It can be used to study the most important features and characteristics of  a distribution (e.g. tendency, dispersion, skewness and kurtosis).

\begin{theorem} \label{Th1}
If $X$ has WGED $(a,b,\lambda )$, then The $r$th moments of random variable $X$, is given by the following

\begin{equation}  \label{eq3.8}
\mu_ r^{'}= \sum_{i=0}^{\infty}\sum_{j=0}^{\infty}\sum_{k=0}^{\infty}
\frac{(-1)^{i+k} a^{i+1}\Gamma (b(i+1)+j+1) \Gamma (r+1)} {i! j! \lambda^r (k+1)^{r+1} \Gamma ((b+i)+1 )}  \binom{b(i+1)+j+1}{k}.
\end{equation}
\end{theorem}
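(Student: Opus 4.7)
The plan is to take the general Weibull-G series expansion given in Eq.~(\ref{eq1.13}) as the starting point and specialize it to the exponential baseline, then evaluate the resulting one-dimensional integral in closed form using only the $\Gamma$-function. Concretely, with $G(x;\lambda)=1-e^{-\lambda x}$ and $g(x;\lambda)=\lambda e^{-\lambda x}$, Eq.~(\ref{eq1.13}) already expresses $f(x;a,b,\lambda)$ as a double power series in $G(x;\lambda)$, so I would write
\begin{equation*}
\mu_r' \;=\; \int_0^\infty x^r f(x;a,b,\lambda)\,dx
\;=\; \sum_{i=0}^{\infty}\sum_{j=0}^{\infty} C_{ij}\int_0^\infty x^r \lambda e^{-\lambda x}\bigl[1-e^{-\lambda x}\bigr]^{b(i+1)+j-1}dx,
\end{equation*}
where $C_{ij}=\dfrac{(-1)^i a^{i+1} b\,\Gamma(b(i+1)+j+1)}{i!\,j!\,\Gamma(b(i+1)+1)}$ is just the coefficient read off Eq.~(\ref{eq1.13}).

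The next step is to expand the power $[1-e^{-\lambda x}]^{b(i+1)+j-1}$ by the generalized binomial theorem, introducing the third summation index $k$:
\begin{equation*}
\bigl[1-e^{-\lambda x}\bigr]^{b(i+1)+j-1}=\sum_{k=0}^{\infty}(-1)^{k}\binom{b(i+1)+j-1}{k}e^{-k\lambda x}.
\end{equation*}
Plugging this into the integral reduces each summand to $\lambda\int_0^\infty x^r e^{-(k+1)\lambda x}dx$, which is the standard $\Gamma$-integral and evaluates to $\Gamma(r+1)/[\lambda^r(k+1)^{r+1}]$. Collecting the constants $C_{ij}$, the binomial coefficient, and this $\Gamma$-factor then yields precisely the triple-series formula in Eq.~(\ref{eq3.8}) (up to a clerical check of the binomial index $b(i+1)+j\pm 1$ and the $\Gamma((b+i)+1)$ appearing in the denominator).

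The only genuine obstacle is legitimizing the interchange of the three summations with the integral over $(0,\infty)$. I would handle this by Tonelli/Fubini after replacing every signed term by its absolute value: the pointwise bound $|1-e^{-\lambda x}|<1$ on $(0,\infty)$ combined with the fact that $f$ is a bona fide density forces absolute convergence of the expanded series on the same domain, so termwise integration is justified whenever $\mu_r'$ exists. Everything else is routine manipulation: substitute, expand, integrate, and relabel.
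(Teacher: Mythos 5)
Your proposal is essentially the paper's own proof: the authors likewise specialize Eq.~(\ref{eq1.13}) to the exponential baseline $g(x;\lambda)=\lambda e^{-\lambda x}$, $G(x;\lambda)=1-e^{-\lambda x}$, expand $\left[1-e^{-\lambda x}\right]^{b(i+1)+j-1}$ binomially in $k$, and evaluate $\int_0^\infty x^r e^{-(k+1)\lambda x}\,dx$ as a gamma integral, exactly as you do---and your ``clerical check'' is well taken, since the derivation genuinely yields $\binom{b(i+1)+j-1}{k}$ and $\Gamma(b(i+1)+1)$, while the $\binom{b(i+1)+j+1}{k}$ and $\Gamma((b+i)+1)$ in the stated theorem are typos the paper carries into its final line. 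One caveat on your added rigor: the Tonelli justification does not hold as stated, because replacing the terms by absolute values resums the exponential series to $e^{+a[G/(1-G)]^b}$, against which the integral diverges (indeed each fixed-$i$ term $\int_0^\infty x^r\lambda e^{b(i+1)\lambda x}\,dx$ already diverges), so the interchange is only formal---but the paper performs the same interchange with no comment whatsoever, so your argument matches its proof step for step.
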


\begin{proof}
We start with the well known distribution of the $r$th moment of the random variable $X$ with probability density function $f(x)$ given by

\begin{equation} \label{eq3.9}
\mu_ r^{'}=\int_0^{\infty} x^r f(x;a,b,\lambda) dx.
\end{equation}

\noindent
Substituting from Eq. (\ref{eq1.1}) and Eq. (\ref{eq1.2}) into Eq. (\ref{eq1.13}) we get

\begin{equation*}
\mu_ r^{'}= \sum_{i=0}^{\infty}\sum_{j=0}^{\infty}\frac{(-1)^i a^{i+1} b\cdot  \Gamma (b(i+1)+j+1)}{i! j! \Gamma ((b+i)+1)} \int\limits_{0}^{\infty} x^r \lambda e^{-\lambda x} \left[1-e^{-\lambda x}\right]^{b(i+1)+j-1} dx,
\end{equation*}

\noindent
since $0<1-e^{-\lambda x}<1$ for $x>0$, the binomial series expansion of $$\left[1-e^{-\lambda x}\right]^{b(i+1)+j-1}$$ yields
\[
\left[1-e^{-\lambda x}\right]^{b(i+1)+j-1}=
\sum_{k=0}^{\infty} (-1)^k\cdot \binom{b(i+1)+j-1}{k } e^{-k \lambda x},
\]
then we get
\begin{equation*}
\mu_ r^{'}= \sum_{i=0}^{\infty}\sum_{j=0}^{\infty}\sum_{k=0}^{\infty} \frac{(-1)^{i+k} a^{i+1} \lambda  \Gamma (b(i+1)+j+1)}{i! j!\Gamma ((b+i)+1)}\binom{ b(i+1)+j-1 }{ k} \int_{0}^{\infty} x^r e^{-(k +1)\lambda x} dx,
\end{equation*}

\noindent
by using the definition of gamma function in the form, Zwillinger \cite{Zwillinger},
\[
\Gamma (Z)=x^z \int\limits_{0}^{\infty}e^{tx} t^{z-1}dt, \quad z,x,>0.
\]
Finally, we obtain the $r$th moment of WGED in the form

\begin{equation*}
\mu_ r^{'}= \sum_{i=0}^{\infty}\sum_{j=0}^{\infty}\sum_{k=0}^{\infty} \frac{(-1)^{i+k} a^{i+1} \Gamma (b(i+1)+j+1) \Gamma (r+1)}{i! j! \lambda^{r} (k+1)^{r+1} \Gamma ((b+i)+1 )} \binom{b(i+1)+j+1}{k}.
\end{equation*}
This completes the proof.
\end{proof}

\section{The Moment Generating Function}
The moment generating function (mgf),  $M_X(t)$,  of a random variable $X$ provides the basis of an alternative route to analytic results compared with working directly with the pdf and cdf of $X$.

\begin{theorem} \label{Th2}
The moment generating function (mgf) of WGED is given by

\begin{equation} \label{eq4.1}
M_X(t) = \sum_{r=0}^{\infty}\sum_{i=0}^{\infty}\sum_{j=0}^{\infty}\sum_{k=0}^{\infty}
\frac{(-1)^{i+k} a^{i+1} t^r \Gamma (b(i+1)+j+1) \Gamma (r+1)}{i! j!r! \lambda^{r} (k+1)^{r+1} \Gamma ((b+i)+1 )} \binom{b(i+1)+j+1}{k}.
\end{equation}
\end{theorem}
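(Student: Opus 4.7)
The plan is to derive $M_X(t)$ directly from its Taylor series definition and then substitute the moment formula already established in Theorem \ref{Th1}. Recall that for any random variable $X$ whose moment generating function exists in a neighborhood of $0$, one has the formal expansion
\begin{equation*}
M_X(t) \;=\; E[e^{tX}] \;=\; \sum_{r=0}^{\infty} \frac{t^r}{r!}\,\mu_r',
\end{equation*}
so the statement should follow by plugging the triple-sum expression for $\mu_r'$ from Eq.~(\ref{eq3.8}) into this series and collecting the factor $t^r/r!$ under the sum in $r$.

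Concretely, first I would write $M_X(t)=\int_0^\infty e^{tx}\,f(x;a,b,\lambda)\,dx$, expand $e^{tx}=\sum_{r\geq 0}(tx)^r/r!$, and interchange sum and integral. The resulting inner integrals $\int_0^\infty x^r f(x;a,b,\lambda)\,dx$ are exactly the raw moments $\mu_r'$, for which Theorem \ref{Th1} provides the closed form as a triple series in the indices $i,j,k$. Appending the outer summation over $r$ and inserting the extra $t^r/r!$ turns the triple series into the quadruple series with summand
\begin{equation*}
\frac{(-1)^{i+k}\,a^{i+1}\,t^r\,\Gamma(b(i+1)+j+1)\,\Gamma(r+1)}{i!\,j!\,r!\,\lambda^{r}\,(k+1)^{r+1}\,\Gamma((b+i)+1)}\binom{b(i+1)+j+1}{k},
\end{equation*}
which is precisely Eq.~(\ref{eq4.1}).

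The only delicate point is justifying the interchange of summation and integration (and, implicitly, of the three series already used to obtain $\mu_r'$). Since the binomial and exponential expansions involved converge absolutely on $(0,\infty)$ and the integrand is nonnegative after taking absolute values term by term, a routine application of Fubini--Tonelli suffices. No new integral has to be evaluated: the entire content of Theorem \ref{Th2} reduces to combining the identity $M_X(t)=\sum_{r\geq 0}(t^r/r!)\,\mu_r'$ with Theorem \ref{Th1}, so the ``hard'' part is really just careful bookkeeping of the four summation indices.
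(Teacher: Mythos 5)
Your proposal matches the paper's own proof essentially step for step: the paper likewise writes $M_X(t)=\int_0^\infty e^{tx}f(x;a,b,\lambda)\,dx$, expands $e^{tx}$ to obtain $M_X(t)=\sum_{r\geq 0}(t^r/r!)\,\mu_r'$, and then substitutes the triple series of Eq.~(\ref{eq3.8}) to get the quadruple sum of Eq.~(\ref{eq4.1}). Your added remark on justifying the interchange via Fubini--Tonelli is a point the paper leaves implicit, but the argument is otherwise identical.
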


\begin{proof}
The moment generating function,  $M_X(t)$,  of the random variable $X$ with probability density function, $f(x)$ is given by

\begin{equation*}
M_X(t) = \int_{0}^{\infty}e^{tx} f(x;a,b,\lambda  ) dx,
\end{equation*}

\noindent
using series expansion of $e^{xt}$, we obtain

\begin{equation} \label{eq4.mg1}
M_X(t) = \sum_{r=0}^\infty  \frac{t^r}{r!} \int_0^{\infty} x^r f(x;a,b,\lambda  ) dx
= \sum_{r=0}^\infty  \frac{t^r}{r!} \mu_r^{'}.
\end{equation}

\noindent
Substituting from Eq. \ref{eq3.8} into Eq. \ref{eq4.mg1}, we obtain the moment generating function of WGED in the form

\begin{equation*}
M_X(t) = \sum_{r=0}^{\infty}\sum_{i=0}^{\infty}\sum_{j=0}^{\infty}\sum_{k=0}^{\infty}
\frac{(-1)^{i+k} a^{i+1} t^r \Gamma (b(i+1)+j+1) \Gamma (r+1)}{i! j!r! \lambda^{r} (k+1)^{r+1} \Gamma ((b+i)+1 )} \binom{b(i+1)+j+1}{k}.
\end{equation*}
This completes the proof.
\end{proof}

\section{Order Statistics }
In this section, we derive closed form expressions for the pdf  of the $r$th order statistic of the WGED. Let $X_{1:n}, X_{2:n},\cdots, X_{n:n}$ denote the order statistics obtained from a random sample $X_1, X_2, \cdots, X_n$ which taken from a continuous population with cumulative distribution function (cdf), $F(x;\varphi)$ and probability density function (pdf), $f(x;\varphi )$, then the probability density function of $X_{r:n}$ is given

\begin{equation} \label{eq5.1}
f_{r:n}(x;\varphi )=\frac{1}{B(r,n-r+1)}[F(x;\varphi )]^{r-1} [1-F(x;\varphi )]^{n-r} f(x;\varphi),
\end{equation}

\noindent
where $f(x;\varphi)$, $F(x;\varphi)$ are the pdf and cdf of $WGED(\varphi )$ given by Eq. (\ref{eq2.2}) and Eq.(\ref{eq2.1}) respectively, $\varphi =(a, b, \lambda )$ and $B(.,.)$ is the beta function, also we define first order statistics $X_{1:n}= \min (X_1, X_2, \cdots, X_n)$, and the last order statistics as $X_{n:n}= \max (X_1, X_2, \cdots, X_n)$. Since $0 < F(x;\varphi )< 1$  for $x>0$, we can use the binomial expansion of $[1-F(x;\varphi )]^{n-r}$ given as follows

\begin{equation} \label{eq5.2}
[1-F(x;\varphi )]^{n-r}=\sum_{i=0}^{n-r} \binom{n-r}{ i} (-1)^i [F(x;\varphi )]^i.
\end{equation}

\noindent
Substituting from Eq. (\ref{eq5.2}) into Eq. (\ref{eq5.1}), we obtain

\begin{equation} \label{eq5.3}
f_{r:n}(x;\varphi )=\frac{f(x;\varphi )}{B(r,n-r+1)} \sum_{i=0}^{n-r} \binom{ n-r}{i} (-1)^i [F(x;\varphi )]^{i+r-1}.
\end{equation}

\noindent
Substituting from Eq. (\ref{eq2.1}) and Eq. (\ref{eq2.2}) into Eq. (\ref{eq5.3}), we obtain

\begin{equation} \label{eq5.4}
f_{r:n}(x;a,b,\lambda )=\sum_{i=0}^{n-r} \sum_{j=0}^{i+r-1}
\frac{(-1)^{i+j} n!}{i! (r-1)! (n-r-1)!} \binom{ i+r-1}{ j } f(x;(j+1)a,b,\lambda ).
\end{equation}

\noindent
Relation (\ref{eq5.4}) show that $f_{r:n}(x;\varphi )$ is the weighted average of the Weibull-G exponential distribution with different shape parameters.

\section{Parameters Estimation}
In this section, point and interval estimation of the unknown parameters of the WGED are derived by using the method of maximum likelihood based on a complete sample data.

\subsection{Maximum likelihood estimation:}
Let $x_1, x_2, \cdots, x_n$ denote a random sample of complete data from the WGED. The likelihood function is given as

\begin{equation} \label{eq6.1}
L = \prod_{i=1}^{n} f(x_i, a, b, \lambda ),
\end{equation}

\noindent
substituting from (\ref{eq2.2}) into (\ref{eq6.1}), we have

\begin{equation*}
L = \prod_{i=1}^{n}ab\lambda \,  e^{\lambda x_i} [e^{\lambda x_i}-1]^{b-1}  e^{-a[e^{\lambda x_i}-1]^b}.
\end{equation*}

\noindent
The log-likelihood function is

\begin{equation} \label{eq6.2}
\mathcal{L} = n \ln (ab\lambda ) + \lambda \sum_{i=1}^{n} x_i +(b-1)\sum_{i=1}^{n}\ln (e^{\lambda x_i}-1) -a \sum_{i=1}^{n} [e^{\lambda x_i}-1]^b.
\end{equation}

\noindent
The maximum likelihood estimation of the parameters are obtained by differentiating the log-likelihood function $\mathcal{L}$ with respect to the parameters $a, b$ and $\lambda$ and setting the result equal to zero, we have the following normal equations.

\begin{eqnarray} \label{eq6.3}
\frac{\partial \mathcal{L} }{\partial a} &=&
\frac{n}{a}- \sum_{i=1}^{n} \left[e^{\lambda x_i}-1\right]^b= 0,
\\ \label{eq6.4}
\frac{\partial \mathcal{L}}{\partial b} & = &
\frac{n}{b}+ \sum_{i=1}^{n} \ln \left(e^{\lambda x_i}-1\right)- a \sum_{i=1}^{n} \left[e^{\lambda x_i}-1 \right]^b
\ln \left(e^{\lambda x_i}-1\right)= 0,
\\ \label{eq6.5}
\frac{\partial \mathcal{L}}{\partial \lambda } & = &
\frac{n}{\lambda} + \sum_{i=1}^n x_i  + (b-1) \sum_{i=1}^n \frac{x_i e^{\lambda x_i}}{e^{\lambda x_i}-1} -
a b \sum_{i=1}^n x_i e^{\lambda x_i} \left[e^{\lambda x_i}-1\right]^{b-1}  = 0.
\end{eqnarray}

\noindent
The MLEs can be obtained by solving the nonlinear equations previous, (\ref{eq6.3})--(\ref{eq6.5}), numerically for $a, b$ and
$\lambda$.


\subsection{Asymptotic confidence bounds}
In this section, we derive the asymptotic confidence intervals of these parameters when $a, b >0$ and $\lambda>0$ as the MLEs of the unknown parameters $a, b >0$ and $\lambda >0$ can not be obtained in closed forms, by using variance covariance matrix $I^{-1}$ see Lawless \cite{Lawless2003}, where $I^{-1}$ is the inverse of the observed information matrix which defined as follows

\begin{eqnarray}\label{eq6.6}
\mathbf{I^{-1}} & = &
\left(
\begin{array}{ccc}
-\frac{\partial ^2 \mathcal{L}}{\partial a ^2} & -\frac{\partial ^2 \mathcal{L}}{\partial a \partial b} & -\frac{\partial ^2 \mathcal{L}}{\partial a \partial \lambda  }
\\
-\frac{\partial ^2 \mathcal{L}}{\partial b \partial a } & -\frac{\partial ^2 \mathcal{L}}{\partial b^2} & -\frac{\partial ^2 \mathcal{L}}{\partial b \partial \lambda }
\\
-\frac{\partial ^2 \mathcal{L}}{\partial \lambda  \partial a} & -\frac{\partial ^2 \mathcal{L}}{\partial \lambda  \partial b} & -\frac{\partial ^2 \mathcal{L}}{\partial \lambda ^2}
\end{array}
\right)^{-1}
\nonumber\\
& =&
\left(
\begin{array}{ccc}
var(\hat{a}) & cov( \hat{a}, \hat{b}) & cov( \hat{a}, \hat{ \lambda  })
\\
cov( \hat{b},\hat{a }) & var( \hat{b}) & cov( \hat{b}, \hat{ \lambda  })
\\
 cov( \hat{ \lambda  }, \hat{a}) & cov( \hat{ \lambda }, \hat{ b}) &  var( \hat{ \lambda })
\end{array}
\right).
\end{eqnarray}

\noindent
The second partial derivatives included in $I$ are given as follows.

\begin{eqnarray} \label{eq6.7}
\frac{\partial ^2 \mathcal{L}}{\partial a^2} & = & - \frac{n}{a^2},
\\ \label{eq6.8}
\frac{\partial ^2 \mathcal{L}}{\partial a \partial b } & = &
- \sum_{i=1}^{n}\left[e^{\lambda x_i}-1\right]^b \ln \left( e^{\lambda x_i}-1\right),
\\ \label{eq6.9}
\frac{\partial ^2 \mathcal{L}}{\partial a \partial \lambda  } &= &
-b \sum_{i=1}^{n}x_i e^{\lambda x_i} \left[e^{\lambda x_i}-1\right]^{b-1} ,
\\ \label{eq6.10}
\frac{\partial ^2 \mathcal{L}}{\partial b^2} & = &
- \frac{n}{b^2} -a \sum_{i=1}^{n} \left[e^{\lambda x_i}-1\right]^b \left[ \ln \left( e^{\lambda x_i}-1 \right) \right]^2,
\\ \label{eq6.11}
\frac{\partial ^2 \mathcal{L}}{\partial b \partial \lambda }& = &
\sum_{i=1}^{n}\frac{x_i e^{\lambda x_i}}{e^{\lambda x_i}-1} - a \sum_{i=1}^{n} x_i e^{\lambda x_i} [e^{\lambda x_i}-1]^{b-1}
\left[ b \ln (e^{\lambda x_i}-1) +1 \right] ,
\\ \label{eq6.12}
\frac{\partial ^2 \mathcal{L}}{\partial \lambda  ^2 } & = & - \frac{n}{\lambda ^2}
-(b-1)\sum_{i=1}^{n}\frac{x_i^2 e^{\lambda x_i}}{(e^{\lambda x_i}-1)^2}
- a b \sum_{i=1}^{n}
x_i^2 e^{\lambda x_i} \left(b e^{\lambda x_i}-1 \right ) \left[e^{\lambda x_i}-1\right]^{b-2} .
\end{eqnarray}

\noindent
We can derive the $(1-\delta)100\%$ confidence intervals of the parameters $a, b$ and $\lambda $, by using variance matrix as in the following forms
\[
\hat{a} \pm Z_{\frac{\delta}{2}}\sqrt{var(\hat{a})},\quad \hat{b} \pm Z_{\frac{\delta}{2}}\sqrt{var(\hat{b})},\quad \hat{\lambda } \pm Z_{\frac{\delta}{2}}\sqrt{var(\hat{\lambda  })},
\]

\noindent
where $Z_{\frac{\delta}{2}}$ is the upper $(\frac{\delta}{2})$-th percentile of the standard normal distribution.


\section{Application}
In this section, we present the analysis of a real data set using the WGED $(a, b, \lambda )$ model and compare it with the other fitted models such as exponential distributions (ED), generalized exponential distribution (GED), \cite{Gupta1999}, beta exponential distribution (BED), \cite{Nadarajah2006} and the beta generalized exponential distribution (BGED), \cite{Wagner2009} using Kolmogorov–Smirnov (K–-S) statistic, as well as Akaike information criterion (AIC), Akaike , \cite{Akaike1974}, Bayesian information criterion (BIC) and Hannan-Quinn information criterion (HQIC) values, Schwarz \cite{Schwarz1978}.

\noindent
The data set is obtained from Smith and Naylor \cite{Smith1987}. The data are the strengths of 1.5 cm glass fibres, measured at the National Physical Laboratory, England. Unfortunately, the units of measurement are not given in the paper. This data set is in Table 1.

\begin{center}
Table 1: The data are the strengths of 1.5 cm glass fibres, \cite{Smith1987}.\\
\begin{tabular}{ccccccccccc}\hline
0.55 & 0.93 & 1.25 & 1.36 & 1.49 & 1.52 & 1.58 & 1.61 & 1.64  \\
1.68 & 1.73 & 1.81 & 2    & 0.74 & 1.04 & 1.27 & 1.39 & 1.49  \\
1.53 & 1.59 & 1.61 & 1.66 & 1.68 & 1.76 & 1.82 & 2.01 & 0.77\\
1.11 & 1.28 & 1.42 & 1.5  & 1.54 & 1.6  & 1.62 & 1.66 & 1.69 \\
1.76 & 1.84 & 2.24 & 0.81 & 1.13 & 1.29 & 1.48 & 1.5  & 1.55 \\
1.61 & 1.62 & 1.66 & 1.7  & 1.77 & 1.84 & 0.84 & 1.24 & 1.3 \\
1.48 & 1.51 & 1.55 & 1.61 & 1.63 & 1.67 & 1.7  & 1.78 & 1.89  \\ \hline	 	
\end{tabular}
\end{center}

\noindent
Table 2 gives MLs of parameters of the WGED and sub-models and goodness of fit statistics are in Table 3.

\vspace{0.4 cm}
\begin{center}
Table 2: MLEs of parameters, Log-likelihood.\\
\begin{tabular}{ccccccc} \hline
Model   & MLEs of parameters &  K-S & p-value\\ \hline
ED     &   $\hat{\lambda }$=0.664                                        & 0.402316 & $1.44529\times 10^{-09}$\\
GED    &  $\hat{\lambda}$=2.6105, \; $\hat{\alpha }$=31.3032             & 0.213118 & 0.005444 \\
BED    &  $\hat{a}$=17.7786, \; $\hat{b}$=22.7222, \; $\hat{\lambda }$=0.3898 & 0.159819 & 0.07220 \\
BGED   &  $\hat{a}$=0.4125, \; $\hat{b}$=93.4655, \; $\hat{\lambda }$=0.92271, \; $\hat{\alpha }$=22.6124 & 0.150611 & 0.10470\\
WGED  &   $\hat{a}$=56.881, \; $\hat{b}$=4.893, \; $\hat{\lambda }$=0.222  &   0.127366 & 0.24259\\ \hline
\end{tabular}
\end{center}

\vspace{0.4 cm}

\begin{center}
Table 3: Log-likelihood, AIC, AICC, BIC and HQIC values of models fitted.
\begin{tabular}{lccccccc} \hline
Model  & $\mathcal{L}$  &  $-2\mathcal{L}$& AIC     & AICC     & BIC & HQIC \\ \hline
ED	&	-88.8300	&	-177.6600	&	179.6600	&	179.7256	&	181.8031	&	180.5029	\\
GED	&	-31.3834	&	-62.7668	&	66.7668	&	66.9668	&	71.0531	&	68.4526	\\
BED	&	-24.1270	&	-48.2540	&	54.2540	&	54.6608	&	60.6834	&	56.7827	\\
BGED	&	-15.5995	&	-31.1990	&	39.1990	&	39.8887	&	47.7715	&	42.5706	\\
WGED	&	-14.828	&	-29.6560	&	35.6560	&	36.0628	&	42.0854	&	38.1847	\\ \hline
\end{tabular}
\end{center}

\noindent
We find that the WGED distribution with the three-number of parameters provides a better fit than the previous new modified exponential distribution which was the best in \cite{Guptar12001,Gupta1999,Nadarajah2006,Wagner2009}. It has the largest likelihood, and the smallest K-–S, AIC, BIC and HQIC values among those considered in this paper.

\noindent
Substituting the MLE's of the unknown parameters $a, b, \lambda $  into (\ref{eq6.6}), we get estimation of the variance covariance matrix as the following

$$
I_0^{-1}=\left(
\begin{array}{cccr}
3.655\times 10^{3}  &	7.228               & -2.205             \\
7.228 	            &   0.213               & 1.141\times 10^{-3} \\
-2.205              & 	1.141\times 10^{-3} & 1.505\times 10^{-3}  \\
\end{array}
\right)
$$

\noindent
The approximate 95\% two sided confidence intervals of the unknown parameters $a, b$  and $\lambda  $ are \\
$\left[0 ,175.11\right]$, $\left[3.989,	57.785 \right]$ and $\left[0.146, 0.298 \right]$,  respectively.

\noindent
To show that the likelihood equation have unique solution, we plot the profiles of the log-likelihood function of $a,  b$ and  $\lambda $ in Figures 7 and 8.

\begin{center}
\includegraphics[width=7.2cm,height=6.2cm]{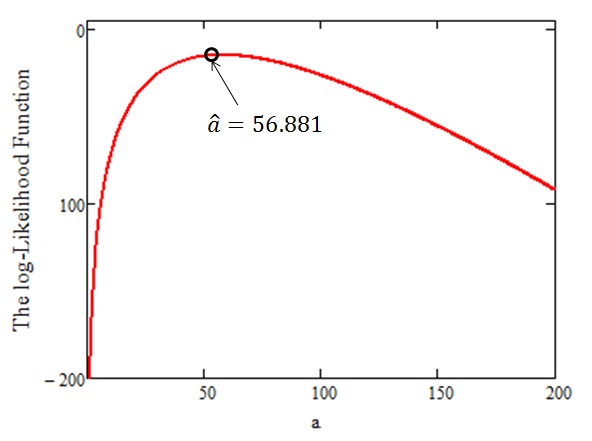}
\includegraphics[width=7.2cm,height=6.2cm]{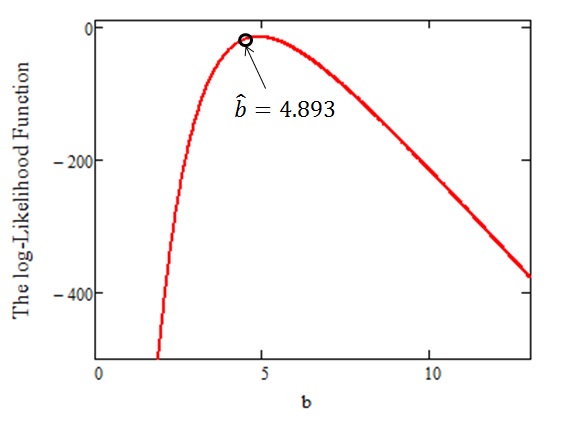} \\
Figure 7: The profile of the log-likelihood function of $a, b$.
\end{center}

\vspace{0.2 cm}

\begin{center}
\includegraphics[width=10cm,height=5.5cm]{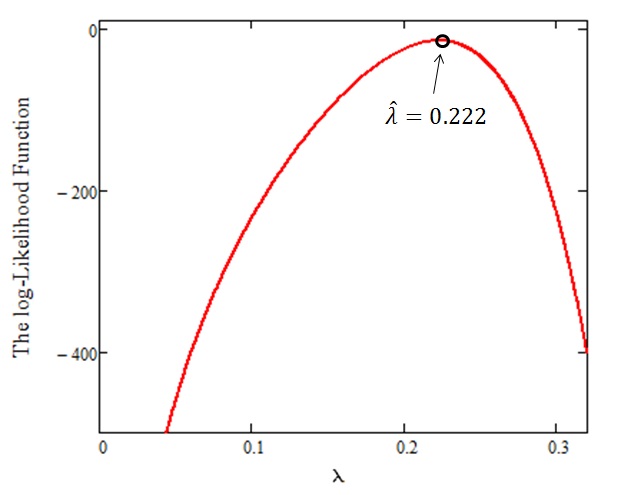}\\
Figure 8: The profile of the log-likelihood function of $\lambda$.
\end{center}

\noindent
The nonparametric estimate of the survival function using the Kaplan-Meier method and its fitted parametric estimations when the distribution is assumed to be $ED, GE, BED, BGED$ and $WGED$ are computed and plotted in Figure 9.

\begin{center}
\includegraphics[width=10cm,height=6cm]{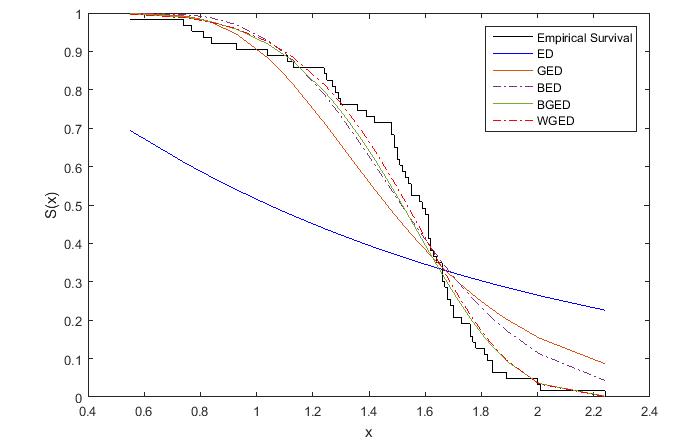}\\
Figure 9: The Kaplan-Meier estimate of the survival function for the data.
\end{center}

\noindent
Figure 10 gives the form of the CDF for the $ED, GE, BED, BGED$ and $WGED$   which are used to fit the data after replacing the unknown parameters included in each distribution by their MLEs.

\begin{center}
\includegraphics[width=10cm,height=6cm]{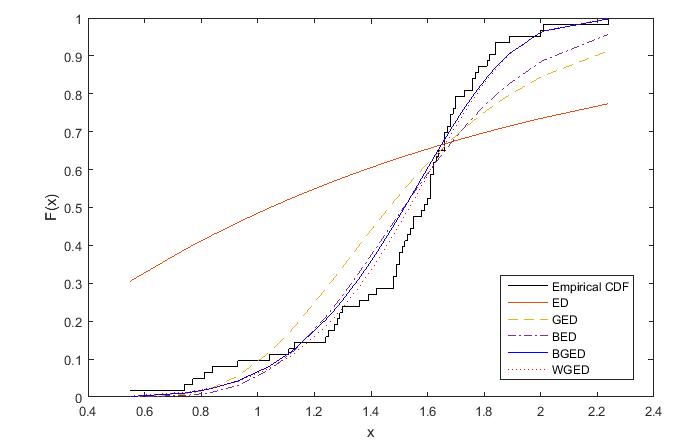}\\
Figure 10: The Fitted cumulative distribution function for the data.
\end{center}

\section{Conclusions}
A new distribution, based on  Weibull- G family distributions, has been proposed and its properties studied. The idea is to add parameter to exponential distribution, so that the hazard function is either increasing or more importantly, bathtub shaped. Using  Weibull  generator component, the distribution has flexibility to model the second peak in a distribution. We have shown that the Weibull-G exponential distribution fits certain well-known data sets better than existing modifications of the generalized families of exponential probability distribution.



\end{document}